\newcommand{\eqdef}{\stackrel{\mathrm{def}}{=}}
\newtheorem{theorem}{Theorem}
\newtheorem{question}[theorem]{Question}
\newtheorem{proposition}[theorem]{Proposition}
\newtheorem{corollary}[theorem]{Corollary}
\newtheorem{remark}[theorem]{Remark}
\newtheorem{definition}[theorem]{Definition}
\newtheorem*{definition*}{Definition}
\newcommand*\diff{\mathop{}\!\mathrm{d}}
\newcommand{\R}{\mathbb{R}} 
\newcommand{\N}{\mathbb{N}}
\newcommand{\e}{\varepsilon}
\newcommand{\MM}{\mathcal{M}}
\newcommand{\NN}{\mathcal{N}}
\newcommand{\mb}{\mathbb}
\newcommand{\ms}{\mathscr}
\title{\mbox{$\varepsilon$-isometric dimension reduction for incompressible subsets of $\ell_p$}}
\author{Alexandros Eskenazis}
\address{CNRS, Institut de Math\'ematiques de Jussieu, Sorbonne Universit\'e, France and Trinity College, University of Cambridge, UK.}
\email{alexandros.eskenazis@imj-prg.fr, ae466@cam.ac.uk}
\thanks{The author was supported by a Junior Research Fellowship from Trinity College, Cambridge. A conference version of this article will be presented in SoCG 2022.}
\begin{document}

\maketitle
\vspace{-3mm}

\begin{abstract}
Fix $p\in[1,\infty)$, $K\in(0,\infty)$ and a probability measure $\mu$. We prove that for every \mbox{$n\in\N$}, $\e\in(0,1)$ and $x_1,\ldots,x_n\in L_p(\mu)$ with $\big\| \max_{i\in\{1,\ldots,n\}} |x_i| \big\|_{L_p(\mu)} \leq K$, there exists $d\leq \frac{32e^2 (2K)^{2p}\log n}{\e^2}$ and vectors $y_1,\ldots, y_n \in \ell_p^d$ such that
$$\forall \ i,j\in\{1,\ldots,n\}, \qquad \|x_i-x_j\|^p_{L_p(\mu)}- \e \leq \|y_i-y_j\|_{\ell_p^d}^p  \leq \|x_i-x_j\|^p_{L_p(\mu)}+\e.$$
Moreover, the argument implies the existence of a greedy algorithm which outputs $\{y_i\}_{i=1}^n$ after receiving $\{x_i\}_{i=1}^n$ as input. The proof relies on a derandomized version of Maurey's empirical method (1981) combined with a combinatorial idea of Ball (1990) and a suitable change of measure. Motivated by the above embedding, we introduce the notion of $\varepsilon$-isometric dimension reduction of the unit ball ${\bf B}_E$ of a normed space $(E,\|\cdot\|_E)$ and we prove that ${\bf B}_{\ell_p}$ does not \mbox{admit $\varepsilon$-isometric dimension reduction by linear operators for any value of $p\neq2$.}
\end{abstract}

\bigskip

{\footnotesize
\noindent {\em 2020 Mathematics Subject Classification.} Primary: 46B85; Secondary: 46B09, 46E30, 68R12.

\noindent {\em Key words.} Dimension reduction, $\varepsilon$-isometric embedding, Maurey's empirical method, change of measure.}


\section{Introduction}


\subsection{Metric dimension reduction} Using standard terminology from metric embeddings (see \cite{Ost13}), we say that a mapping between metric spaces $f:(\MM,d_\MM)\to(\NN,d_\NN)$ is a bi-Lipschitz embedding with distortion at most $\alpha\in[1,\infty)$ if there exists a scaling factor $\sigma\in(0,\infty)$ such that
\begin{equation}
\forall \ x,y\in\MM,\qquad \ \sigma d_\MM(x,y) \leq d_\NN\big(f(x),f(y)\big) \leq \alpha \sigma d_\MM(x,y).
\end{equation}
Throughout this paper, we shall denote by $\ell_p^d$ the linear space $\R^d$ equipped with the $p$-norm,
\begin{equation}
\forall \ a=(a_1,\ldots,a_d)\in\R^d, \qquad \|a\|_{\ell_p^d} = \Big( \sum_{i=1}^d |a_i|^p\Big)^{1/p}.
\end{equation}
The classical Johnson--Lindenstrauss lemma \cite{JL84} asserts that if $(\ms{H},\|\cdot\|_{\ms{H}})$ is a Hilbert space and $x_1,\ldots,x_n\in\ms{H}$, then for every $\e\in(0,1)$ there exist $d\leq \tfrac{C\log n}{\e^2}$ \mbox{and $y_1,\ldots,y_n\in\ell_2^d$ such that}
\begin{equation}
\forall \ i,j\in\{1,\ldots,n\}, \qquad \|x_i-x_j\|_{\ms{H}} \leq \|y_i-y_j\|_{\ell_2^d} \leq (1+\e)\cdot \|x_i-x_j\|_\ms{H},
\end{equation}
where $C\in(0,\infty)$ is a universal constant. In the above embedding terminology, the Johnson--Lindenstrauss lemma states that for every $\e\in(0,1)$, $n\in\N$ and $d\geq\tfrac{C\log n}{\e^2}$, any $n$-point subset of Hilbert space admits a bi-Lipschitz embedding into $\ell_2^d$ with distortion at most $1+\e$. In order to prove their result, Johnson and Lindenstrauss introduced in \cite{JL84} the influential random projection method that has since had many important applicatons in metric geometry and theoretical computer science and kickstarted the field of \emph{metric dimension reduction} (see the recent survey \cite{Nao18} of Naor) which lies at the intersection of those two subjects. 

Following \cite{Nao18}, we say that an infinite dimensional Banach space $(E,\|\cdot\|_E)$ admits bi-Lipschitz dimension reduction if there exists $\alpha = \alpha(E)\in[1,\infty)$ such that for every $n\in\N$, there exists $k_n=k_n(E,\alpha)\in\N$ satisfying
\begin{equation}
\lim_{n\to\infty} \frac{\log k_n}{\log n} = 0
\end{equation}
and such that any $n$-point subset $\ms{S}$ of $E$ admits a bi-Lipschitz embedding with distortion at most $\alpha$ in a finite-dimensional linear subspace $F$ of $E$ with $\mathrm{dim}F\leq k_n$. The only non-Hilbertian space that is known to admit bi-Lipschitz dimension reduction is the 2-convexification of the classical Tsirelson space, as proven by Johnson and Naor in \cite{JN10}. Turning to negative results, Matou{\v s}ek proved in \cite{Mat96} the impossibility of bi-Lipschitz dimension reduction in $\ell_\infty$, whereas Brinkman and Charikar \cite{BC05} (see also \cite{LN04} for a shorter proof) constructed an $n$-point subset of $\ell_1$ which does not admit a bi-Lipschitz embedding into any $n^{o(1)}$-dimensional subspace of $\ell_1$. Their theorem was recently refined by Naor, Pisier and Schechtman \cite{NPS20} who showed that the same $n$-point subset of $\ell_1$ does not embed into any $n^{o(1)}$-dimensional subspace of the trace class $\mathsf{S}_1$ (see also the striking recent work \cite{RV20} of Regev and Vidick, where the impossibility of polynomial almost isometric dimension reduction in $\mathsf{S}_1$ is established). We refer to \cite[Theorem~16]{Nao18} for a summary of the best known bounds quantifying the aforementioned qualitative statements. Despite the lapse of almost four decades since the proof of the Johnson--Lindenstrauss lemma, the following natural question remains stubbornly open.

\begin{question} \label{q:bl}
For which values of $p\notin\{1,2,\infty\}$ does $\ell_p$ admit bi-Lipschitz dimension reduction?
\end{question}


\subsection{Dimensionality and structure} An important feature of the formalism of bi-Lipschitz dimension reduction in a Banach space $E$ is that both the distortion $\alpha(E)$ of the embedding and the dimension $k_n(E,\alpha)$ of the target subspace $F$ are independent of the given $n$-point subset $\ms{S}$ of $E$. Nevertheless, there are instances in which one can construct delicate embeddings whose distortion or the dimension of their targets depends on subtle geometric parameters of $\ms{S}$. For instance, we mention an important theorem of Schechtman \cite[Theorem~5]{Sch06} (which built on work of Klartag and Mendelson \cite{KM05}) who constructed a linear embedding of an arbitrary subset $\ms{S}$ of $\ell_2$ into any Banach space $E$ whose distortion depends only on the Gaussian width of $\ms{S}$ and the $\ell$-norm of the identity operator $\mathsf{id}_E:E\to E$. In the special case that $E$ is a Hilbert space, a substantially richer family of such embeddings was devised in \cite{LMPV17}.

Let $\mu$ be a probability measure. For a subset $\ms{S}$ of $L_p(\mu)$, we shall denote
\begin{equation}
\ms{I}(\ms{S}) \eqdef \big\|\max_{x\in \ms{S}}|x|\big\|_{L_p(\mu)}
\end{equation}
and we will say that $\ms{S}$ is $K$\emph{-incompressible}\footnote{The terminology is borrowed by the standard use of the term ``incompressible vector'' from random matrix theory, which refers to points on the unit sphere of $\mb{R}^n$ which are far from the coordinate vectors $e_1,\ldots,e_n$.} if $\ms{I}(\ms{S})\leq K$. The main contribution of the present paper is the following dimensionality reduction theorem for incompressible subsets of $L_p(\mu)$ which, in contrast to all the results discussed earlier, is valid for \emph{any} value of $p\in[1,\infty)$.

\begin{theorem}[$\varepsilon$-isometric dimension reduction for incompressible subsets of $L_p(\mu)$] \label{thm:main}
Fix parameters $p\in[1,\infty)$, $n\in\N$, $K\in(0,\infty)$ and let $\{x_i\}_{i=1}^n$ be a $K$-incompressible family of vectors in $L_p(\mu)$ for some probability measure $\mu$. Then for every $\e\in(0,1)$, there exists $d\in\N$ with $d\leq\tfrac{32e^2(2K)^{2p}\log n}{\e^2}$ and points $y_1,\ldots,y_n\in\ell_p^d$ such that
\begin{equation} \label{eq:main}
\forall \ i,j\in\{1,\ldots,n\}, \qquad \|x_i-x_j\|^p_{L_p(\mu)}- \e \leq \|y_i-y_j\|_{\ell_p^d}^p  \leq \|x_i-x_j\|^p_{L_p(\mu)}+\e.
\end{equation}

\end{theorem}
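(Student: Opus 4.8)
The plan is to prove Theorem~\ref{thm:main} via a derandomized empirical approximation, reducing the $L_p$ problem to one of approximating each $x_i$ by a short combination of finitely many atoms. The overall strategy of \emph{Maurey's empirical method} is the following: write each target vector as an average (an expectation) of simpler objects, then argue that a random sample of the appropriate length $d$ concentrates around this average well enough that all $\binom{n}{2}$ pairwise distances are preserved up to additive error $\e$. First I would set up the probabilistic model. The natural move is to interpret the coordinates of the $y_i$ as values of the $x_i$ sampled at $d$ independent random points $\omega_1,\ldots,\omega_d$ drawn from $\mu$ (up to normalization by $d^{-1/p}$), so that $\|y_i-y_j\|_{\ell_p^d}^p = \frac1d\sum_{k=1}^d |x_i(\omega_k)-x_j(\omega_k)|^p$ is an empirical average whose expectation is exactly $\|x_i-x_j\|_{L_p(\mu)}^p$. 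One then needs a tail bound showing that this empirical mean is within $\e$ of its expectation simultaneously for all pairs $(i,j)$ with probability bounded away from zero; a union bound over the $\binom{n}{2}$ pairs together with a Bernstein- or Hoeffding-type concentration inequality should force $d\approx \e^{-2}(\text{range})^2\log n$.

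The subtlety — and where I expect the $K$-incompressibility hypothesis and the stated constant $(2K)^{2p}$ to enter — is controlling the \emph{range} of the summands $|x_i(\omega)-x_j(\omega)|^p$ so the concentration inequality gives the right dimension. Pointwise one only knows $|x_i(\omega)-x_j(\omega)|^p \leq (2\max_k|x_k(\omega)|)^p$, and the random variable $M(\omega) = \max_k |x_k(\omega)|$ is merely in $L_p(\mu)$ with $\|M\|_{L_p(\mu)}\leq K$, hence unbounded; a naive Hoeffding bound fails. This is precisely the obstacle the abstract flags as resolved by \emph{a suitable change of measure combined with a combinatorial idea of Ball}. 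My plan here is to reweight the sampling measure by a density proportional to $(1+M^p/K^p)$, or more precisely to replace $\mu$ by a probability measure $\nu$ whose Radon--Nikodym derivative tames the large values of $M$; one samples the $\omega_k$ from $\nu$ and compensates each summand by the reciprocal of the density, turning an unbounded integrand into one with controlled $L_\infty$ and $L_2$ norms under $\nu$. The constant $(2K)^{2p}$ strongly suggests that after this reweighting the effective range of each bounded summand is of order $(2K)^p$ and its second moment of order $(2K)^{2p}$, so that a Bernstein inequality yields $d\lesssim \e^{-2}(2K)^{2p}\log n$ with the explicit factor $32e^2$ coming from the optimized constants.

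The final ingredient is \emph{derandomization}: to obtain the claimed greedy algorithm rather than a mere existence statement, I would select the sample points $\omega_1,\ldots,\omega_d$ one at a time by the method of conditional expectations, maintaining at each step a potential function (an exponential moment / sum over pairs of the failure events) that upper-bounds the probability of eventually violating some constraint. Because the probabilistic argument shows this potential is strictly below $1$ at the outset and each greedy choice can only decrease it, a valid deterministic choice of the next atom always exists, yielding the vectors $\{y_i\}$ after processing the input. I expect the main technical difficulty to lie not in the concentration step itself but in executing the change of measure so that Ball's combinatorial device correctly bounds the variance of the reweighted summands uniformly over all pairs — getting the dependence on $K$ to collapse to the clean $(2K)^{2p}$ factor, rather than something depending on higher moments of $M$, is the crux of the argument.
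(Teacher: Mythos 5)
Your proposal is correct in outline and, at its core, is the same argument as the paper's: Maurey-style empirical sampling of coordinates combined with a change of measure to tame the unbounded envelope $M=\max_i|x_i|$. The difference lies in how each step is packaged. For the concentration step, the paper follows Ball's device: it reduces to simple functions, views the $p$-distance matrix $\big(\|x_i-x_j\|_{L_p(\mu)}^p\big)_{i,j}$ as a point in the convex cone of $L_p$-distance matrices inside $\ell_\infty^{\binom{n}{2}}$, writes it as a convex combination (with weights $\mu(S_k)$) of distance matrices of constant vectors, and then applies the abstract type-based Maurey lemma, using $T_2\big(\ell_\infty^{\binom{n}{2}}\big)\lesssim\sqrt{\log n}$; you instead sample $\omega_1,\ldots,\omega_d$ i.i.d.\ and control all pairs by scalar Hoeffding/Bernstein plus a union bound over $\binom{n}{2}$ pairs. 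These are morally equivalent (the $\sqrt{\log n}$ in the type constant of $\ell_\infty^{\binom{n}{2}}$ plays exactly the role of your union bound), and your route is more elementary and in fact yields a better constant than $32e^2$; what the paper's formulation buys is the direct link to Ivanov's greedy algorithm in the $2$-uniformly smooth space $\ell_{\log N}^{N}$, $N=\binom{n}{2}$, which is how Corollary 2.7 (the algorithmic statement) is obtained. For the change of measure, the paper uses Maurey's factorization with the exact density $f=M^p/\int M^p\,\diff\mu$ and the lattice isometry $g\mapsto f^{-1/p}g$, which makes the reweighted functions bounded by $\|M\|_{L_p(\mu)}\le K$ in $L_\infty$; your importance-sampling phrasing (sample from $\nu$, compensate by $1/f$) is the same mechanism, and with this exact density the compensated summands are bounded by $(2K)^p$ on the nose, so the dependence on $K$ collapses cleanly without any appeal to higher moments of $M$ --- the ``crux'' you flag is resolved precisely by choosing $f\propto M^p$ rather than $f\propto 1+M^p/K^p$ (the latter also works, at the cost of a factor $2$ in the range). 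Finally, your method-of-conditional-expectations derandomization is a legitimate alternative to the paper's citation of Ivanov, though note that derandomization is not needed for the existence statement of the theorem itself.
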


Besides the appearance of the incompressibility parameter $K$ in the bound for the dimension $d$ of the target space, Theorem \ref{thm:main} differs from the Johnson--Lindenstrauss lemma in that the error in \eqref{eq:main} is \emph{additive} rather than \emph{multiplicative}. Recall that a map between metric spaces $f:(\MM,d_\MM)\to(\NN,d_\NN)$ is called an $\e$-isometric embedding if
\begin{equation}
\forall \ x,y\in\MM, \qquad \big| d_\NN\big(f(x),f(y)\big) - d_\MM(x,y)\big| \leq \e.
\end{equation}
Embeddings with additive errors occur naturally in metric geometry and, more specifically, in metric dimension reduction (see e.g.~\cite[Section~9.3]{Ver18}). We mention for instance a result \cite[Theorem~1.5]{PV14} of Plan and Vershynin who showed that any subset $\ms{S}$ of the unit sphere in $\ell_2^n$ admits a $\delta$-isometric embedding into the $d$-dimensional Hamming cube $(\{-1,1\}^d,\|\cdot\|_1)$, where $d$ depends polynomially on $\delta^{-1}$ and the Gaussian width of $\ms{S}$. In the above embedding terminology and in view of the elementary inequality $|\alpha-\beta| \leq |\alpha^p-\beta^p|^{1/p}$ which holds for every $\alpha,\beta>0$, Theorem \ref{thm:main} asserts that any $n$-point $K$-incompressible subset of $L_p(\mu)$ admits an $\e^{1/p}$-isometric embedding into $\ell_p^d$ for the above choice of dimension $d$. For further occurences of $\e$-isometric embeddings in the dimensionality reduction and compressed sensing literatures, we refer to \cite{PV14,Jac15,Jac17,LMPV17,Ver18,BG18} and the references therein.


\subsection{Method of proof} A large part of the (vast) literature on metric dimension reduction focuses on showing that a typical low-rank linear operator chosen randomly from a specific ensemble acts as an approximate isometry on a given set $\ms{S}$ with high probability. For subsets $\ms{S}$ of Euclidean space, this principle has been confirmed for random projections \cite{JL84,FM88,DG03,Nao18}, matrices with Gaussian \cite{Gor88,IM99,Sch06}, Rademacher \cite{AV99,Ach03} and subgaussian \cite{KM05,IN07,Dir16,LMPV17} entries, randomizations of matrices with the RIP \cite{KW11} as well as more computationally efficient models \cite{Mat08,AC09,AL13,KN14,BDN15} which are based on sparse matrices. Beyond its inherent interest as an $\ell_p$-dimension reduction theorem (albeit, for specific configurations of points),  Theorem \ref{thm:main} also differs from the aforementioned works in its method of proof. The core of the argument, rather than sampling from a random matrix ensemble, relies on Maurey's empirical method \cite{Pis81} (see Section \ref{sec:maurey}) which is a dimension-free way to approximate points in bounded convex subsets of Banach spaces by convex combinations of extreme points with prescribed length. An application of the method to the positive cone of $L_p$-distance matrices (the use of which in this context is inspired by classical work of Ball \cite{Bal90}) equipped with the supremum norm allows us to deduce (see Proposition \ref{prop:bounded}) \mbox{the conclusion of Theorem \ref{thm:main} under the stronger assumption that}
\begin{equation} \label{eq:strong-ass}
K\geq\max_{i\in\{1,\ldots,n\}} \|x_i\|_{L_\infty(\mu)}.
\end{equation}
While Maurey's empirical method is an a priori existential statement that is proven via the probabilistic method, recent works (see \cite{Bar18,Iva21}) have focused on derandomizing its proof for specific Banach spaces. In the setting of Theorem \ref{thm:main}, we can use these tools to show (see Corollary \ref{cor:alg}) that there exists a greedy algorithm which receives as input the high-dimensional data $\{x_i\}_{i=1}^n$ and produces as output the low-dimensional points $\{y_i\}_{i=1}^n$. Finally, using a suitable change of measure  \cite{Mau74} (see Section \ref{sec:fact}) we are able to relax the stronger assumption \eqref{eq:strong-ass} to that of $K$-incompressibility and derive the conclusion of Theorem \ref{thm:main}. Finally, we emphasize that, in contrast to most of the dimension reduction algorithms (randomized or not) discussed earlier, the one which gives Theorem \ref{thm:main} is not \emph{oblivious} but is rather tailored to the specific configuration of points $\{x_i\}_{i=1}^n$ as it relies on the use of Maurey's empirical method.


\subsection{$\e$-isometric dimension reduction} Given two moduli $\omega,\Omega:[0,\infty)\to[0,\infty)$, we say (following \cite{Nao18}) that a Banach space $(E,\|\cdot\|_E)$ admits metric dimension reduction with moduli $(\omega,\Omega)$ if for any $n\in\N$ there exists $k_n=k_n(E)\in\N$ with $k_n=n^{o(1)}$ as $n\to\infty$ such that for any $x_1,\ldots,x_n\in E$, there exists a subspace $F$ of $E$ with $\mathrm{dim}F\leq k_n$ and $y_1,\ldots,y_n \in F$ satisfying
\begin{equation} \label{eq:mdromega}
\forall \ i,j\in\{1,\ldots,n\},\qquad \omega(\|x_i-x_j\|_E) \leq \|y_i-y_j\|_E \leq \Omega(\|x_i-x_j\|_E).
\end{equation}
In view of Theorem \ref{thm:main}, we would be interested in formulating a suitable notion of dimension reduction via $\e$-isometric embeddings which\mbox{ would be fitting to the moduli appearing in \eqref{eq:main}.}

\begin{remark}
Let $a,b\in(0,\infty)$, suppose that $\omega,\Omega:[0,\infty)\to[0,\infty)$ are two moduli satisfying
\begin{equation}
\lim_{t\to\infty} \frac{\omega(t)}{t} = a \qquad \mbox{and} \qquad \lim_{t\to\infty} \frac{\Omega(t)}{t}=b
\end{equation}
and that the Banach space $(E,\|\cdot\|_E)$ admits metric dimension reduction with moduli $(\omega,\Omega)$. Fix $n\in\N$ and $x_1,\ldots,x_n\in E$. Applying the assumption \eqref{eq:mdromega} to the points $sx_1,\ldots,sx_n$ where $s>\!\!\!>1$, we deduce that there exist points $y_1(s),\ldots,y_n(s)$ in a $k_n$-dimensional subspace $F(s)$ of $E$ such that
\begin{equation} \label{eq:mdromega2}
\forall \ i,j\in\{1,\ldots,n\},\qquad \omega(s\|x_i-x_j\|_E) \leq \big\|y_i(s)-y_j(s)\big\|_E \leq \Omega(s\|x_i-x_j\|_E).
\end{equation}
For any $\eta\in(0,1)$, we can then choose $s$ large enough (as a function of $\eta$ and the $x_i$) such that
\begin{equation} \label{eq:mdromega3}
\forall \ i,j\in\{1,\ldots,n\},\qquad (1-\eta)a\|x_i-x_j\|_E \leq \frac{\|y_i(s)-y_j(s)\|_E}{s} \leq (1+\eta)b\|x_i-x_j\|_E.
\end{equation}
Therefore, we conclude that $E$ also admits bi-Lipschitz dimension reduction (with distortion $b/a$).
\end{remark}

This simple scaling argument suggests that any reasonable notion of $\e$-isometric dimension reduction can differ from the corresponding bi-Lipschitz theory only in small scales, thus motivating the following definition. We denote by ${\bf B}_E$ the unit ball of a normed space $(E,\|\cdot\|_E)$.
\begin{definition} [$\e$-isometric dimension reduction]
Fix $\e\in(0,1)$, $r\in(0,\infty)$ and let $(E,\|\cdot\|_E)$ be an infinite-dimensional Banach space. We say that ${\bf B}_E$ admits $\e$-isometric dimension reduction with power $r$ if for every $n\in\N$ there exists $k_n=k_n^r(E,\e)\in\N$ with $k_n=n^{o(1)}$ as $n\to\infty$ for which the following condition holds. For every $n$ points $x_1,\ldots,x_n\in {\bf B}_E$ there exists a linear subspace $F$ of $E$ with $\mathrm{dim}F\leq k_n$ and points $y_1,\ldots,y_n\in F$ satisfying
\begin{equation} \label{eq:qidr}
\forall \ i,j\in\{1,\ldots,n\},\qquad \|x_i-x_j\|_E^r - \e \leq \|y_i-y_j\|_E^r \leq \|x_i-x_j\|_E^r +\e.
\end{equation}
\end{definition}

The fact that the whole space $\ell_2$ admits $\e$-isometric dimension reduction with $r=1$ and corresponding target dimension $k_n^1(\ell_2,\e)\lesssim \tfrac{\log n}{\e^2}$ follows from the additive version of the Johnson--Lindenstrauss lemma, first proven by Liaw, Mehrabian, Plan and Vershynin \cite{LMPV17} (see also \cite[Proposition~9.3.2]{Ver18}). In Corollary \ref{cor:l2} we obtain the same conclusion for its unit ball ${\bf B}_{\ell_2}$ with a slightly weaker bound for the target dimension using our Theorem \ref{thm:main}. 


It is clear from the definitions that if a Banach space $E$ admits bi-Lipschitz dimension reduction with distortion $\tfrac{1+\e}{1-\e}$, where $\e\in(0,1)$, then ${\bf B}_E$ admits $2\e$-isometric dimension reduction with power $r=1$. The $\e$-isometric analogue of Question \ref{q:bl} deserves further investigation.

\begin{question} \label{q:qi}
For which values of $p\neq2$ does ${\bf B}_{\ell_p}$ admit $\e$-isometric dimension reduction?
\end{question}

Even though the $K$-incompressibility assumption of Theorem \ref{thm:main} may a priori seem restrictive, it is satisfied for \emph{most} configurations of points in ${\bf B}_{\ell_p}$. Suppose that $n,N\in\N$ such that $N$ is polynomial\footnote{This relation between the parameters $n,N$ is natural as any $n$-point subset of $\ell_p$ embeds isometrically in $\ell_p^N$ with $N=\binom{n}{2}+1$ by Ball's isometric embedding theorem \cite{Bal90}.} in $n$. Then, standard considerations (see Remark \ref{rem:random}) show that with high probability, a uniformly chosen $n$-point subset $\ms{S}$ of $N^{1/p}{\bf B}_{\ell_p^N}$ is $O(\log n)^{1/p}$-incompressible. 


\subsection{$\e$-isometric dimension reduction by linear maps} A close inspection of the proof of Theorem \ref{thm:main} (see Remark \ref{rem:mapislinear}) reveals that in fact the low-dimensional points $\{y_i\}_{i=1}^n$ can be realized as images of the initial data $\{x_i\}_{i=1}^n$ under a carefully chosen linear operator. Nevertheless, we will show that for any $p\neq2$ and $n$ large enough, there exist an $n$-point subset of ${\bf B}_{\ell_p}$ whose image under any fixed linear $\e$-isometric embedding has rank which is linear in $n$. In fact, we shall prove the following more general statement which refines a theorem that Lee, Mendel and Naor proved in \cite{LMN05} for bi-Lipschitz embeddings.

\begin{theorem} [Impossibility of linear dimension reduction in ${\bf B}_{\ell_p}$] \label{thm:no-Lp}
Fix $p\neq2$ and two moduli $\omega,\Omega:[0,\infty)\to[0,\infty)$ with $\omega(1)>0$. For arbitrarily large $n\in\N$, there exists an $n$-point subset $\ms{S}_{n,p}$ of ${\bf B}_{\ell_p}$ such that the following holds. If $T:\mathrm{span}(\ms{S}_{n,p})\to\ell_p^d$ is a linear operator satisfying
\begin{equation}
\forall \ x,y\in\ms{S}_{n,p},\qquad \omega(\|x-y\|_{\ell_p}) \leq \|Tx-Ty\|_{\ell_p^d} \leq \Omega(\|x-y\|_{\ell_p}),
\end{equation}
then $d\geq \left(\tfrac{\omega(1)}{\Omega(1)}\right)^\frac{2p}{|p-2|} \cdot\tfrac{n-1}{2}$.
\end{theorem}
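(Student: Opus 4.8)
The plan is to exhibit a configuration on which the \emph{linearity} of $T$ forces the Euclidean and the $\ell_p$ geometries to compete, the discrepancy between them being quantified by the Banach--Mazur distance $d^{|1/2-1/p|}$ between $\ell_p^d$ and $\ell_2^d$; this is exactly the quantity that must be large, and inverting it produces the exponent $\tfrac{2p}{|p-2|}=\big(|\tfrac12-\tfrac1p|\big)^{-1}$. Following the circle of ideas of Lee--Mendel--Naor, I would take $\ms{S}_{n,p}$ to be the origin together with (a normalization of) the first $n-1$ standard unit vectors $e_1,\dots,e_{n-1}$, placed inside ${\bf B}_{\ell_p}$ so that the distances entering the hypothesis are recorded by $\omega(1)$ and $\Omega(1)$. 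Writing $u_i\eqdef Te_i\in\ell_p^d$ and $\bar u\eqdef\tfrac1{n-1}\sum_i u_i$, the hypothesis (together with $T0=0$) yields a two-sided control, a uniform \emph{separation} $\|u_i-u_j\|_{\ell_p^d}\gtrsim\omega(1)$ from below and a uniform \emph{bound} $\|u_i-\bar u\|_{\ell_p^d}\lesssim\Omega(1)$ from above, the latter by convexity.

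The lower bound is the elementary variance identity in Hilbert space,
\[ \sum_{i=1}^{n-1}\big\|u_i-\bar u\big\|_{\ell_2^d}^2=\frac{1}{2(n-1)}\sum_{i,j=1}^{n-1}\big\|u_i-u_j\big\|_{\ell_2^d}^2 . \]
Comparing the Euclidean and $\ell_p^d$ norms in the direction that costs nothing ($\|\cdot\|_{\ell_2^d}\ge\|\cdot\|_{\ell_p^d}$ when $p\ge2$, and the dual inequality, carrying the factor $d^{|1/2-1/p|}$, when $p<2$), the right-hand side is $\gtrsim(n-1)\,\omega(1)^2$. This is the step that produces the linear factor $\tfrac{n-1}2$, and it uses only that the $u_i$ are spread apart.

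For the matching upper bound I would rewrite the same sum as a Rademacher average over signs $\eta_1,\dots,\eta_{n-1}\in\{-1,1\}$,
\[ \sum_{i=1}^{n-1}\big\|u_i-\bar u\big\|_{\ell_2^d}^2=\E_\eta\Big\|\sum_{i=1}^{n-1}\eta_i(u_i-\bar u)\Big\|_{\ell_2^d}^2\le d^{\,2|\frac12-\frac1p|}\;\E_\eta\Big\|\sum_{i=1}^{n-1}\eta_i(u_i-\bar u)\Big\|_{\ell_p^d}^2 , \]
so that the target dimension is spent exactly once, in the single factor $d^{2|1/2-1/p|}$. The remaining $\ell_p^d$-average is then estimated by the \emph{dimension-free} type-$2$ (for $p\ge2$) or cotype-$2$ (for $p<2$) constant of $\ell_p^d$, which is $O(\sqrt p)$ and is the other pillar of the argument. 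Collecting the two bounds and solving for $d$ yields precisely the exponent $\tfrac{2p}{|p-2|}$ together with the linear dependence on $n$.

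The principal obstacle is controlling the Rademacher average $\E_\eta\big\|T\big(\sum_i\eta_i e_i\big)\big\|_{\ell_p^d}$: a \emph{naive} type-$2$ estimate bounds it by $\sqrt{n-1}$ times the individual norms, which reintroduces a factor proportional to $n-1$ and cancels the gain from the variance identity, leaving no $n$-dependence. Consequently the configuration $\ms{S}_{n,p}$ must be engineered so that the hypothesis simultaneously tames these signed sums; this is the point at which linearity and the assumption $p\neq2$ are genuinely used, and where the random sign configurations that defeat the purely metric version of the statement are ruled out. Once this control is secured, the cases $p\ge2$ and $p<2$ are dual to one another under the interchange of the roles of type and cotype, and since the construction is available for every $n$, the conclusion holds for arbitrarily large $n$.
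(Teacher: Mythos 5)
Your proposal contains a genuine gap, and it is the one you yourself flag at the end: controlling the signed sums is not a technical step that can be deferred, it is the entire content of the theorem, and your chosen point set makes it unobtainable in principle. Concretely, with $\ms{S}_{n,p}=\{0\}\cup\{e_1,\dots,e_{n-1}\}$ (in any normalization), the nonzero points of $\ms{S}_{n,p}$ are linearly independent, so \emph{every} map $f:\ms{S}_{n,p}\to\ell_p^d$ with $f(0)=0$ extends to a linear operator $T:\mathrm{span}(\ms{S}_{n,p})\to\ell_p^d$ by setting $Te_i=f(e_i)$. For this configuration the linearity hypothesis is therefore vacuous, and the statement collapses to a purely metric embedding question for a near-equilateral set (all pairwise distances equal $1$ or $2^{1/p}$ up to scaling), which \emph{does} admit embeddings into $\ell_p^d$ with $d=O(\log n)$, e.g.\ by taking the images to be suitably scaled independent random sign vectors (compare the simplex phenomenon recorded in Remark \ref{rem:random}). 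Consequently no argument can extract a bound $d\gtrsim n$ from your configuration; the cancellation you observe between the variance identity and the type-$2$ estimate is not a nuisance to be engineered away but a symptom of this structural failure.

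The missing idea, which is how the paper (following Lee--Mendel--Naor) proceeds, is to put the signed sums \emph{into the point set itself}, and only $2^m$ carefully chosen ones rather than all of them: one takes $\ms{S}_{n,p}=\{0\}\cup\{e_1,\dots,e_{2^m}\}\cup\big\{2^{-m/p}w_1,\dots,2^{-m/p}w_{2^m}\big\}$, where the $w_i$ are the rows of the $2^m\times 2^m$ Walsh matrix and $n=2^{m+1}+1$. Since each $w_i$ is a linear combination of the $e_j$, linearity of $T$ now genuinely constrains the images, and orthogonality of the $w_i$ replaces your Rademacher average by the exact Parseval identity
\begin{equation*}
\sum_{i=1}^{2^m}\|Tw_i\|_{\ell_2^d}^2=2^m\sum_{j=1}^{2^m}\|Te_j\|_{\ell_2^d}^2,
\end{equation*}
with no type or cotype constants appearing at all. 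The embedding hypothesis supplies $\|Te_j\|_{\ell_p^d}\le\Omega(1)$ and $\big\|T\big(2^{-m/p}w_i\big)\big\|_{\ell_p^d}\ge\omega(1)$; the Walsh points have $\ell_p$-norm $1$ but are of size $2^{m/p}$ before normalization, and the comparison between the $\ell_2^d$ and $\ell_p^d$ norms is free on one side and costs $d^{|1/2-1/p|}$ on the other (which side depends on whether $p<2$ or $p>2$). Feeding these three facts into the identity yields $2^{m(1+2/p)}d^{-|2-p|/p}\omega(1)^2\le 4^m\Omega(1)^2$, i.e.\ $d\ge\big(\omega(1)/\Omega(1)\big)^{2p/|p-2|}\cdot\tfrac{n-1}{2}$. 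Your intuition about the source of the exponent $\tfrac{2p}{|p-2|}$ and the role of the $\ell_2$/$\ell_p$ comparison is correct, but without the Walsh vectors in $\ms{S}_{n,p}$ the argument cannot be completed.
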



\subsection*{Acknowledgments} I am grateful to Keith Ball, Assaf Naor and Pierre Youssef for insightful discussions and useful feedback.


\section{Proof of Theorem \ref{thm:main}}

We say that a normed space $(E,\|\cdot\|_E)$ has Rademacher type $p$ if there exists a universal constant $T\in(0,\infty)$ such that for every $n\in\N$ and every $x_1,\ldots,x_n\in E$,
\begin{equation} \label{eq:type}
\frac{1}{2^n} \sum_{\e\in\{-1,1\}^n} \Big\| \sum_{i=1}^n \e_i x_i\Big\|_E^p \leq T^p \sum_{i=1}^n \|x_i\|_E^p.
\end{equation} 
The least constant $T$ such that \eqref{eq:type} is satisfied is denoted by $T_p(E)$. A standard symmetrization argument (see \cite[Proposition~9.11]{LT91}) shows that if $X_1,\ldots,X_n$ are independent $E$-valued random variables with $\mb{E}[X_i]=0$ for every $i\in\{1,\ldots,n\}$, then
\begin{equation} \label{eq:general-type}
\mb{E}\Big\|\sum_{i=1}^n X_i\Big\|_E^p \leq \big(2T_p(E)\big)^p \sum_{i=1}^n \mb{E} \|X_i\|_E^p.
\end{equation}

\subsection{Maurey's empirical method and its algorithmic counterparts} \label{sec:maurey}

 A classical theorem of Carath\'eodory asserts than if $\ms{T}$ is a subset of $\R^m$, then any point $z\in\mathrm{conv}(\ms{T})$ can be expressed as a convex combination of at most $m+1$ points of $\ms{T}$. Maurey's empirical method is a powerful dimension-free approximate version of Carath\'eodory's theorem, first popularized in \cite{Pis81}, that has numerous applications in geometry and theoretical computer science. Let $(E,\|\cdot\|_E)$ be a Banach space, consider a bounded subset $\ms{T}$ of $E$ and fix $z\in\mathrm{conv}(\ms{T})$. Since $z$ is a convex combination of elements of $\ms{T}$, there exists $m\in\N$, $\lambda_1,\ldots,\lambda_m\in(0,\infty)$ and $t_1,\ldots,t_m\in\ms{T}$ such that
\begin{equation} \label{eq:conv-comb}
\sum_{k=1}^m \lambda_k = 1 \qquad \mbox{and} \qquad z=\sum_{k=1}^m \lambda_kt_k.
\end{equation}
Let $X$ be an $E$-valued discrete random variable with $\mb{P}\{X=t_k\}=\lambda_k$ for all $k\in\{1,\ldots,m\}$ and consider $X_1,\ldots,X_d$ i.i.d.~copies of $X$. Then, conditions \eqref{eq:conv-comb} ensure that $X$ is well defined and $\mb{E}[X]=z$. Therefore, applying the Rademacher type condition \eqref{eq:general-type} to the centered random variables $\{X_s-z\}_{s=1}^d$ and normalizing, we get
\begin{equation}
\mb{E}\Big\| \frac{1}{d} \sum_{s=1}^d X_s - z\Big\|_E^p \leq \frac{(2T_p(E))^p}{d^{p-1}} \ \mb{E}\|X-z\|_E^p.
\end{equation}
Since $X$ takes values in $\ms{T}$, if $\ms{T} \subseteq R{\bf B}_E$, we then deduce that there exist $x_1,\ldots,x_d\in\ms{T}$ such that
\begin{equation}
\Big\|\frac{1}{d}\sum_{s=1}^d x_s - z\Big\|_E \leq \frac{4RT_p(E)}{d^{1-1/p}}.
\end{equation}

While the above argument is probabilistic, recent works have focused on derandomizing Maurey's sampling lemma for smaller classes of Banach spaces, thus constructing deterministic algorithms which output the empirical approximation $\tfrac{x_1+\ldots+x_d}{d}$ of $z$. The first result in this direction is due to Barman \cite{Bar18} who treated the case that $E$ is an $L_r(\mu)$-space, $r\in(1,\infty)$. This assumption was recently generalized by Ivanov in \cite{Iva21} who built a greedy algorithm which constructs the desired empirical mean in an arbitrary $p$-uniformly smooth space.


\subsection{Dimension reduction in $L_p(\mu)$ for uniformly bounded vectors} 
With Maurey's empirical method at hand, we are ready to proceed to the first part of the proof of Theorem \ref{thm:main}, namely the $\e$-isometric dimension reduction property of $L_p(\mu)$ under the strong assumption that the given point set consists of functions which are bounded in $L_\infty(\mu)$.

\begin{proposition} \label{prop:bounded}
Fix $p\in[1,\infty)$, $n\in\N$ and let $\{x_i\}_{i=1}^n$ be a family of vectors in $L_p(\mu)$ for some probability measure $\mu$. Denote by $L=\max_{i\in\{1,\ldots,n\}} \|x_i\|_{L_\infty(\mu)}\in[0,\infty]$. Then for every $\e\in(0,1)$, there exists $d\in\N$ with $d\leq\tfrac{32e^2(2L)^{2p}\log n}{\e^2}$ and \mbox{$y_1,\ldots,y_n\in\ell_p^d$ such that}
\begin{equation}
\forall \ i,j\in\{1,\ldots,n\}, \qquad \|x_i-x_j\|^p_{L_p(\mu)}- \e \leq \|y_i-y_j\|_{\ell_p^d}^p  \leq \|x_i-x_j\|^p_{L_p(\mu)}+\e.
\end{equation}
\end{proposition}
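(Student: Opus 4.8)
The plan is to realise the array of $p$-th power distances as a single average of one-dimensional building blocks, in the spirit of Ball's isometric embedding theorem, and then to replace Carath\'eodory's theorem by Maurey's empirical method. We may assume $L<\infty$, since otherwise the claimed bound on $d$ is vacuous. For a point $\omega$ in the underlying probability space let $M(\omega)\in\R^{\binom{n}{2}}$ be the vector whose coordinate indexed by a pair $1\le i<j\le n$ equals $|x_i(\omega)-x_j(\omega)|^p$. Since $|x_i|\le L$ $\mu$-almost everywhere, every coordinate of $M(\omega)$ lies in $[0,(2L)^p]$, so $\|M(\omega)\|_{\ell_\infty^{\binom{n}{2}}}\le (2L)^p\eqdef R$. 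Writing $\|x_i-x_j\|_{L_p(\mu)}^p=\int|x_i-x_j|^p\,\D\mu$, the vector $A\eqdef\big(\|x_i-x_j\|_{L_p(\mu)}^p\big)_{1\le i<j\le n}$ is exactly the barycenter $\int M(\omega)\,\D\mu(\omega)$ of the $\ell_\infty^{\binom{n}{2}}$-valued random variable $X\eqdef M(\omega)$ with $\omega\sim\mu$; in particular $A$ is a convex average of vectors of $\ell_\infty^{\binom{n}{2}}$-norm at most $R$, which is precisely the setting of Maurey's empirical method.

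The gain over Carath\'eodory comes from the \emph{type} of the ambient space: $E=\ell_\infty^{\binom{n}{2}}$ has Rademacher type $2$ with a logarithmic constant, namely $T_2(\ell_\infty^N)\le C\sqrt{\log N}$ for a universal $C$, by a sub-Gaussian maximal inequality for Rademacher sums over the $N=\binom{n}{2}$ coordinates. I would therefore run the argument of Section~\ref{sec:maurey} with exponent $2$: sampling $\omega_1,\ldots,\omega_d$ i.i.d.\ from $\mu$ and applying the type-$2$ instance of \eqref{eq:general-type} to the centered copies $X_k-A$ gives
\begin{equation*}
\E\Big\|\tfrac{1}{d}\sum_{k=1}^d X_k-A\Big\|_{\ell_\infty^{\binom{n}{2}}}^2 \le \frac{\big(2T_2(\ell_\infty^{\binom{n}{2}})\big)^2}{d}\,\E\|X-A\|_{\ell_\infty^{\binom{n}{2}}}^2 \le \frac{4\,T_2(\ell_\infty^{\binom{n}{2}})^2\,R^2}{d},
\end{equation*}
where the last step uses that every coordinate of $X-A$ lies in $[-R,R]$. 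Since $T_2(\ell_\infty^{\binom{n}{2}})^2$ is of order $\log n$, taking $d$ to be the least integer exceeding $4\,T_2(\ell_\infty^{\binom{n}{2}})^2R^2/\varepsilon^2$—which is at most $\tfrac{32e^2(2L)^{2p}\log n}{\varepsilon^2}$ for the appropriate universal constant—makes the right-hand side at most $\varepsilon^2$. Consequently there is a realization $\omega_1,\ldots,\omega_d$ of the sample for which
\begin{equation*}
\max_{1\le i<j\le n}\Big|\tfrac{1}{d}\sum_{k=1}^d|x_i(\omega_k)-x_j(\omega_k)|^p-\|x_i-x_j\|_{L_p(\mu)}^p\Big|\le\varepsilon.
\end{equation*}

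It then remains to read off the embedding. Setting $y_i\eqdef d^{-1/p}\big(x_i(\omega_1),\ldots,x_i(\omega_d)\big)\in\ell_p^d$ for each $i$, one has $\|y_i-y_j\|_{\ell_p^d}^p=\tfrac{1}{d}\sum_{k=1}^d|x_i(\omega_k)-x_j(\omega_k)|^p$, and the last display shows this lies within $\varepsilon$ of $\|x_i-x_j\|_{L_p(\mu)}^p$, which is the desired conclusion. The conceptual heart of the argument is the first paragraph—the observation, going back to Ball, that the whole array of $p$-th power distances is a single average of coordinate-sampled one-dimensional distance arrays—combined with the choice to measure the approximation in the $\ell_\infty$-norm over pairs, so that the ambient dimension $\binom{n}{2}$ enters only logarithmically, through the type-$2$ constant. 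I expect the one genuinely fiddly step to be tracking the numerical constant $32e^2$, which must be extracted from a sharp form of the maximal inequality bounding $T_2(\ell_\infty^{\binom{n}{2}})$; the centering, the entrywise bound $R=(2L)^p$, and the passage from the mean-square estimate to a single good realization are all routine.
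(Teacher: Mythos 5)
Your proof is correct, and its core is the same as the paper's: view the array of $p$-th power distances as a point of $\ell_\infty^{\binom{n}{2}}$, exploit that $T_2\big(\ell_\infty^{\binom{n}{2}}\big)\lesssim\sqrt{\log n}$, run Maurey's empirical method with exponent $2$, and read off the low-dimensional points by sampling coordinates of the $x_i$. The one genuine difference is how the distance array is exhibited as a barycenter. The paper first reduces to simple functions, takes a partition $\{S_1,\ldots,S_m\}$ on whose cells all $x_i$ are constant, and writes the distance matrix as a \emph{finite} convex combination of the matrices $\ms{M}(y(k))$, so that the discrete form of Maurey's lemma from Section \ref{sec:maurey} applies verbatim and the sampled objects are cells of the partition. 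You keep the continuum: the random vector $M(\omega)$ with $\omega\sim\mu$ is bounded by $R=(2L)^p$ in $\ell_\infty^{\binom{n}{2}}$ and has mean exactly the distance array, and you apply the type-$2$ inequality \eqref{eq:general-type} directly to i.i.d.\ copies $M(\omega_1),\ldots,M(\omega_d)$. This is legitimate, since \eqref{eq:general-type} holds for arbitrary independent mean-zero random variables rather than only finitely supported ones, and it buys two things: you skip the simple-function approximation entirely, and your coordinatewise bound $\|M(\omega)-A\|_\infty\leq R$ (in place of the triangle-inequality bound $2R$ used in the paper) yields $d\leq\lceil 8e^2(2L)^{2p}\log n/\e^2\rceil$, a factor of roughly $4$ better than claimed. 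As for the constant you flagged as fiddly, the paper's route settles it cleanly: $\ell_\infty^{\binom{n}{2}}$ is $e$-isomorphic to $\ell_{p_n}^{\binom{n}{2}}$ with $p_n=\log\binom{n}{2}$, and $T_2(\ell_{p_n})\leq\sqrt{p_n-1}$, giving $T_2\big(\ell_\infty^{\binom{n}{2}}\big)<\sqrt{2e^2\log n}$, which comfortably fits inside $32e^2$. What the paper's discretization buys, and your version gives up, is the finite ground set needed for derandomization: Corollary \ref{cor:alg} runs Ivanov's greedy algorithm over the finite family $\{\ms{M}(y(k))\}_{k=1}^m$, whereas i.i.d.\ sampling from a general $\mu$ is inherently probabilistic.
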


\begin{proof}
We shall identify $\ell_\infty^{\binom{n}{2}}$ with the vector space of all symmetric $n\times n$ real matrices with $0$ on the diagonal equipped with the supremum norm. Consider the set
\begin{equation}
\mathscr{C}_p = \big\{ \big( \|z_i-z_j\|_{L_p(\rho)}^p\big)_{i,j=1,\ldots,n}: \ \rho \mbox{ is a probability measure and } z_1,\ldots,z_n\in L_p(\rho)\big\} \subseteq \ell_\infty^{\binom{n}{2}}.
\end{equation}
It is obvious that $\ms{C}_p$ is a cone in the sense that $\ms{C}_p = \lambda \ms{C}_p$ for every $\lambda>0$ but moreover $\ms{C}_p$ is convex. To see this, consider $A,B\in\ms{C}_p$, probability spaces $(\Omega_1,\rho_1), (\Omega_2,\rho_2)$ and vectors $\{z_i\}_{i=1}^n, \{w_i\}_{i=1}^n$ in $L_p(\rho_1)$ and $L_p(\rho_2)$ respectively such that
\begin{equation} \label{eq:convex-cone}
\forall \ i,j\in\{1,\ldots,n\}, \qquad A_{ij} = \|z_i-z_j\|_{L_p(\rho_1)}^p \ \ \mbox{and} \ \ B_{ij} = \|w_i-w_j\|_{L_p(\rho_2)}^p.
\end{equation}
Fix $\lambda\in(0,1)$ and consider the disjoint union $\Omega_1\sqcup\Omega_2$ of $\Omega_1$ and $\Omega_2$ equipped with the probability measure $\rho(\lambda) = \lambda \rho_1+(1-\lambda)\rho_2$. Then, by \eqref{eq:convex-cone} the functions $\zeta_i:\Omega_1\sqcup\Omega_2\to\R$ given by $\zeta_i|_{\Omega_1} = z_i$ and $\zeta_i|_{\Omega_2}=w_i$, where $i\in\{1,\ldots,n\}$, belong in $L_p(\rho(\lambda))$ and satisfy the conditions
\begin{equation}
\forall \ i,j\in\{1,\ldots,n\}, \quad \|\zeta_i-\zeta_j\|_{L_p(\rho(\lambda))}^p = \lambda \|z_i-z_j\|_{L_p(\rho_1)}^p + (1-\lambda)  \|w_i-w_j\|_{L_p(\rho_2)}^p = \lambda A_{ij} + (1-\lambda) B_{ij},
\end{equation}
which ensure that $\lambda A+(1-\lambda)B\in\ms{C}_p$, making $\ms{C}_p$ a convex cone. Consider the embedding $\ms{M}:L_p(\mu)^n\to\ms{C}_p$ mapping a vector $z=(z_1,\ldots,z_n)$ to the corresponding distance matrix, i.e.
\begin{equation}
\forall \ i,j\in\{1,\ldots,n\}, \qquad \ms{M}(z)_{ij} = \|z_i-z_j\|_{L_p(\mu)}^p.
\end{equation}
Without loss of generality we will assume that the given points $x_1,\ldots,x_n\in L_p(\mu)$ are simple functions with  $\|x_i\|_{L_\infty(\mu)} \leq L$. Let $\{S_1,\ldots,S_m\}$ be a partition of the underlying measure space such that each $x_i$ is constant on each $S_k$ and suppose that $x_i|_{S_k} = a(i,k) \in[-L,L]$ for $i\in\{1,\ldots,n\}$ and $k\in\{1,\ldots,m\}$. Then, for every $i,j\in\{1,\ldots,n\}$, we have
\begin{equation} \label{eq:it-is-conv}
\ms{M}(x)_{ij} = \sum_{k=1}^m \int_{S_k} |x_i-x_j|^p \,\diff\mu = \sum_{k=1}^m \mu(S_k) \cdot \big|a(i,k)-a(j,k)\big|^p = \sum_{k=1}^m \mu(S_k) \ \ms{M}\big(y(k)\big)_{ij},
\end{equation}
where $y(k) \eqdef (a(1,k),\ldots,a(n,k))\in L_p(\mu)^n$ is a vector whose components are constant functions. As $\mu$ is a probability measure and $\{S_1,\ldots,S_m\}$ is a partition, identity \eqref{eq:it-is-conv} implies that
\begin{equation}
\ms{M}(x) \in \mathrm{conv} \big\{ \ms{M}\big( y(k)\big): \ k\in\{1,\ldots,m\}\big\} \subseteq \ell_\infty^{\binom{n}{2}}.
\end{equation}
Observe that since $a(i,k)\in[-L,L]$ for every $i\in\{1,\ldots,n\}$ and $k\in\{1,\ldots,m\}$, we have
\begin{equation} \label{eq:proof-diam}
\forall \ k\in\{1,\ldots,m\},\qquad \big\| \ms{M}\big(y(k)\big)\big\|_{\ell_\infty^{\binom{n}{2}}} = \max_{i,j\in\{1,\ldots,n\}} \big|a(i,k)-a(j,k)\big|^p \leq (2L)^p.
\end{equation}
Moreover, $\ell_\infty^{\binom{n}{2}}$ is $e$-isomorphic to $\ell_{p_n}^{\binom{n}{2}}$ where $p_n=\log\binom{n}{2}$. It is well-known (see \cite[Chapter~9]{LT91}) that $T_2(\ell_p) \leq \sqrt{p-1}$ for every $p\geq2$ and thus  
\begin{equation} \label{eq:proof-type}
T_2\big(\ell_\infty^{\binom{n}{2}} \big) \leq e\sqrt{p_n-1} < \sqrt{2e^2\log n}.
\end{equation}
Applying Maurey's sampling lemma (Section \ref{sec:maurey}) while taking into account \eqref{eq:proof-diam} and \eqref{eq:proof-type}, we deduce that for every $d\geq1$ there exist $k_1,\ldots,k_d\in\{1,\ldots,m\}$ such that
\begin{equation}
\Big\|\frac{1}{d} \sum_{s=1}^d \ms{M}\big( y(k_s)\big) - \ms{M}(x)\Big\|_{\ell_\infty^{\binom{n}{2}}} \leq \frac{2^{p+\frac{5}{2}}eL^p\sqrt{\log n}}{\sqrt{d}}.
\end{equation}
Therefore, if $\e\in(0,1)$ is such that $d\geq \tfrac{32e^2 (2L)^{2p}\log n}{\e^2}$ we then have
\begin{equation} \label{eq:use-maurey}
\forall \ i,j\in\{1,\ldots,n\},\qquad \Big| \frac{1}{d} \sum_{s=1}^d \big|a(i,k_s)-a(j,k_s)\big|^p - \|x_i-x_j\|_{L_p(\mu)}^p \Big| \leq \e.
\end{equation}e
Finally, consider for each $i\in\{1,\ldots,n\}$ a vector $y_i=(y_i(1),\ldots,y_i(d))\in\ell_p^d$ given by
\begin{equation}
\forall \ s\in\{1,\ldots,d\}, \qquad y_i(s) = \frac{a(i,k_s)}{d^{1/p}}
\end{equation}
and notice that \eqref{eq:use-maurey} can be equivalently rewritten as
\begin{equation}
\forall \ i,j\in\{1,\ldots,n\}, \qquad \|x_i-x_j\|^p_{L_p(\mu)}- \e \leq \|y_i-y_j\|_{\ell_p^d}^p  \leq \|x_i-x_j\|^p_{L_p(\mu)}+\e,
\end{equation}
concluding the proof of the proposition.
\end{proof}

\begin{remark}
It is worth emphasizing that the coordinates of the vectors $y_1,\ldots,y_n$ produced in Proposition \ref{prop:bounded} consist (up to rescaling) of values of the functions $x_1,\ldots,x_n$. Such low-dimensional embeddings via \emph{sampling} are a central object of study in approximation theory, see e.g.~the recent survey \cite{KKLT21} and the references therein.
\end{remark}

The additive version of the Johnson--Lindenstrauss lemma, first observed in \cite{LMPV17} as a consequence of a deep matrix deviation inequality (see also \cite[Chapter~9]{Ver18}), asserts that for every $n$ points $x_1,\ldots,x_n$ in a Hilbert space $\ms{H}$ and every $\e\in(0,1)$, there exists $d\leq \tfrac{C\log n}{\e^2}$ and points $y_1,\ldots,y_n\in\ell_2^d$ such that
\begin{equation} \label{eq:add-JL}
\forall \ i,j\in\{1,\ldots,n\}, \qquad \|x_i-x_j\|_{\ms{H}}-\e \leq \|y_i-y_j\|_{\ell_2^d} \leq \|x_i-x_j\|_\ms{H}+\e,
\end{equation}
where $C\in(0,\infty)$ is a universal constant. We will now observe that the spherical symmetry of ${\bf B}_{\ell_2}$ allows us to deduce a similar conclusion for points in ${\bf B}_{\ms{H}}$ by removing the incompressibility assumption from Proposition \ref{prop:bounded} when $p=2$. We shall use the standard notation $L_p^N$ for the space $L_p(\mu_N)$ where $\mu_N$ is the normalized counting measure on the finite set $\{1,\ldots,N\}$, that is
\begin{equation}
\forall \ a=(a_1,\ldots,a_N)\in\R^N,\qquad \|a\|_{L_p^N} \eqdef \Big(\frac{1}{N}\sum_{i=1}^N |a_i|^p\Big)^{1/p}.
\end{equation}
Observe that for $0<p<q\leq\infty$, we have ${\bf B}_{L_q^N} \subseteq {\bf B}_{L_p^N}$. 

\begin{corollary} \label{cor:l2}
There exists a universal constant $C\in(0,\infty)$ such that the following statement holds. Fix $n\in\N$ and let $\{x_i\}_{i=1}^n$ be a family of vectors in ${\bf B}_{\ms{H}}$ for some Hilbert space $\ms{H}$. Then for every $\e\in(0,1)$, there exists $d\in\N$ with $d\leq\tfrac{C(\log n)^3}{\e^4}$ and points $y_1,\ldots,y_n\in\ell_2^d$ such that
\begin{equation} \label{eq:l2l2}
\forall \ i,j\in\{1,\ldots,n\}, \qquad \|x_i-x_j\|_{\ms{H}}- \e \leq \|y_i-y_j\|_{\ell_2^d}  \leq \|x_i-x_j\|_{\ms{H}}+\e.
\end{equation}
\end{corollary}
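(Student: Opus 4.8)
The obstruction to applying Proposition \ref{prop:bounded} directly is that its target dimension is controlled through $L=\max_i\|x_i\|_{L_\infty(\mu)}$, whereas the hypothesis here bounds only the Hilbertian (i.e.\ $L_2$) norms of the $x_i$. The plan is to exploit the rotation invariance of the Euclidean ball to replace the given configuration by a rotated copy whose coordinates are \emph{spread out}, so that its $L_\infty$-norm exceeds its $L_2$-norm only by a factor of order $\sqrt{\log n}$; this is the sense in which spherical symmetry removes the incompressibility assumption. As a first reduction, since $x_1,\ldots,x_n$ span a subspace of dimension $N\leq n$, I would assume without loss of generality that $\ms{H}=\ell_2^N$ with $N\leq n$.

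The heart of the argument is a random rotation. For a Haar-random $U\in \mathrm{O}(N)$ and a fixed $v\in\R^N$, the vector $Uv/\|v\|_{\ell_2^N}$ is uniform on the sphere $\mb{S}^{N-1}$, so the standard sub-Gaussian concentration of a single coordinate, $\mb{P}\{|(Uv)_k|\geq t\|v\|_{\ell_2^N}\}\leq 2e^{-cNt^2}$, combined with a union bound over the $N$ coordinates and the $n$ points, shows that with positive probability
\[
\forall\ i\in\{1,\ldots,n\},\qquad \|Ux_i\|_{\ell_\infty^N}\leq C_0\sqrt{\tfrac{\log n}{N}}\,\|x_i\|_{\ell_2^N}
\]
for a universal constant $C_0$ (here one uses $N\leq n$ to absorb the logarithmic factors produced by the union bound). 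Fixing such a $U$ and rescaling, I would set $\tilde x_i\eqdef \sqrt{N}\,Ux_i$, regarded as a function on $\{1,\ldots,N\}$ equipped with the normalized counting measure $\mu_N$. Since multiplication by $\sqrt{N}$ exactly compensates the factor $\tfrac1N$ in the definition of $\|\cdot\|_{L_2(\mu_N)}$, one has $\|\tilde x_i-\tilde x_j\|_{L_2(\mu_N)}=\|x_i-x_j\|_{\ms{H}}$ for all $i,j$, while the displayed bound gives $L\eqdef\max_i\|\tilde x_i\|_{L_\infty(\mu_N)}=\sqrt N\max_i\|Ux_i\|_{\ell_\infty^N}\leq C_0\sqrt{\log n}$, because $\|x_i\|_{\ms{H}}\leq 1$.

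It then remains to apply Proposition \ref{prop:bounded} to the (now $L_\infty$-bounded) family $\{\tilde x_i\}_{i=1}^n\subseteq L_2(\mu_N)$ with $p=2$ and with error parameter $\e^2$ in place of $\e$. This produces $y_1,\ldots,y_n\in\ell_2^d$ with
\[
d\leq \frac{32e^2(2L)^4\log n}{(\e^2)^2}\lesssim \frac{(\log n)^2\cdot\log n}{\e^4}=\frac{(\log n)^3}{\e^4}
\]
such that $\big|\,\|y_i-y_j\|_{\ell_2^d}^2-\|\tilde x_i-\tilde x_j\|_{L_2(\mu_N)}^2\,\big|\leq\e^2$, and hence $\big|\,\|y_i-y_j\|_{\ell_2^d}^2-\|x_i-x_j\|_{\ms{H}}^2\,\big|\leq\e^2$. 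Passing from squared distances to distances via the elementary inequality $|\alpha-\beta|\leq|\alpha^2-\beta^2|^{1/2}$ (valid for $\alpha,\beta\geq0$) then yields $\big|\,\|y_i-y_j\|_{\ell_2^d}-\|x_i-x_j\|_{\ms{H}}\,\big|\leq\e$, which is precisely \eqref{eq:l2l2}.

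I expect the only genuine difficulty to be the random rotation step: one must check that a \emph{single} orthogonal map simultaneously spreads the coordinates of all $n$ vectors, which is exactly where the spherical symmetry of ${\bf B}_{\ell_2}$ — unavailable for general $\ell_p$ — is used, and where the extra $\log n$ factor (hence the weaker $(\log n)^3/\e^4$ dimension, versus $(\log n)/\e^2$ for the additive Johnson--Lindenstrauss lemma) originates. The remaining steps are routine rescalings and the passage from the quadratic to the linear error.
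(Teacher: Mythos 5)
Your proof is correct and follows essentially the same route as the paper: a random rotation to make the configuration $O(\sqrt{\log n})$-bounded in $L_\infty$ with respect to the normalized counting measure, then Proposition \ref{prop:bounded} with $p=2$ and error parameter $\e^2$, and finally the inequality $|\alpha-\beta|\leq|\alpha^2-\beta^2|^{1/2}$. The only (immaterial) differences are that the paper works directly in $L_2^n$ and obtains the max-coordinate bound from the Schechtman--Zinn tail estimate via an expectation computation, whereas you rescale from $\ell_2^N$ and use coordinate-wise sub-Gaussian concentration with a union bound.
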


Before proceeding to the derivation of \eqref{eq:l2l2} we emphasize that since the given points $\{x_i\}_{i=1}^n$ belong in ${\bf B}_\ms{H}$, Corollary \ref{cor:l2} is formally weaker than the Johnson--Lindenstrauss lemma. However we include it here since it differs from \cite{JL84} in that the low-dimensional point set $\{y_i\}_{i=1}^n$ is not obtained as an image of $\{x_i\}_{i=1}^n$ under a typical low-rank matrix from a specific ensemble.

\begin{proof} [Proof of Corollary \ref{cor:l2}]
Since any $n$-point subset $\{x_1,\ldots,x_n\}$ of $\ms{H}$ embeds linearly and isometrically in $L_2^n$, we assume that $x_1,\ldots,x_n\in {\bf B}_{L_2^{n}}$. We will need the following claim. 

\smallskip

\noindent {\it Claim.} Suppose that $X_1,\ldots,X_n$ are (not necessarily independent) random vectors, each uniformly distributed on the unit sphere $\mb{S}^{n-1}$ of $L_2^{n}$. Then, for some universal constant $S\in(0,\infty)$,
\begin{equation}
\mb{E} \big[ \max_{i\in\{1,\ldots,n\}} \|X_i\|_{L_\infty^{n}} \big] \leq S \sqrt{\log n},
\end{equation}

\begin{proof} [Proof of the Claim]

By a standard estimate of Schechtman and Zinn \cite[Theorem~3]{SZ90}, for a uniformly distributed random vector $X$ on the unit sphere $\mb{S}^{n-1}$ of $L_2^{n}$, we have
\begin{equation} \label{eq:sz}
\forall \ t\geq \gamma_1\sqrt{\log n}, \qquad \mb{P}\big\{ \|X\|_{L_\infty^{n}} > t\big\} \leq e^{-\gamma_2 t^2}
\end{equation}
for some absolute constants $\gamma_1,\gamma_2\in(0,\infty)$. Let $W\eqdef \max_{i\in\{1,\ldots,n\}} \|X_i\|_{L_\infty^{n}}$ and notice that
\begin{equation} \label{eq:break-integral}
\forall \ K\in(\gamma_1,\infty), \qquad \mb{E}[W] = \int_0^\infty \mb{P}\{W>t\} \,\diff t \leq K\sqrt{\log n} + \int_{K\sqrt{\log n}}^\infty \mb{P}\{W>t\} \,\diff t.
\end{equation}
By the union bound, we have
\begin{equation} \label{eq:union}
\forall \ t>0, \qquad \mb{P}\{W>t\} \leq \sum_{i=1}^n \mb{P}\{X_i>t\} = n \mb{P}\{X_1>t\}.
\end{equation}
Combining \eqref{eq:break-integral} and \eqref{eq:union}, we therefore get
\begin{equation}
\begin{split}
\mb{E}[W] \leq K&\sqrt{\log n}  +n \int_{K\sqrt{\log n}} \mb{P}\{X_1>t\}\,\diff t  \stackrel{\eqref{eq:sz}}{\leq} K\sqrt{\log n}  + n \int_{K\sqrt{\log n}}^\infty e^{-\gamma_2t^2} \,\diff t
\\ & = K\sqrt{\log n}  +n\sqrt{\log n} \int_{K}^\infty n^{-\gamma_2u^2} \,\diff u = K\sqrt{\log n} + \sqrt{\log n} \int_K^\infty n^{1-\gamma_2u^2} \,\diff u .
\end{split}
\end{equation}
Choosing $K>\gamma_1$ such that $K^2\gamma_2>1$, the exponent in the last integrand becomes negative, thus
\begin{equation}
\mb{E}[W]\leq K\sqrt{\log n}+2\sqrt{\log n} \int_K^\infty 2^{-\gamma_2 u^2}\,\diff u \leq S\sqrt{\log n}\end{equation}
for a large enough constant $S\in(0,\infty)$ and the claim follows.
\end{proof}

Now let $U \in \ms{O}(n)$ be a uniformly chosen random rotation on $\R^{n}$. The aforementioned claim shows that since $\|x_i\|_{L_2^{n}}\leq1$ for every $i\in\{1,\ldots,n\}$, writing $\hat{x}_i =\tfrac{x_i}{\|x_i\|_{L_2^{n}}}$, we have the estimate
\begin{equation} \label{eq:exp-rot}
\mb{E}\big[ \max_{i\in\{1,\ldots,n\}} \|Ux_i\|_{L_\infty^{n}}\big] \leq \mb{E}\big[ \max_{i\in\{1,\ldots,n\}} \|U\hat{x}_i\|_{L_\infty^{n}}\big] \leq S\sqrt{\log n}.
\end{equation}
Therefore, by \eqref{eq:exp-rot} and Proposition \ref{prop:bounded} there exists a constant $C\in(0,\infty)$ and a rotation $U\in\ms{O}(n)$ such that for every \mbox{$\e\in(0,1)$} there exists $d\leq \tfrac{C(\log n)^3}{\e^4}$ and points $y_1,\ldots,y_n\in\ell_2^d$ for which 
\begin{equation}
\forall \ i,j\in\{1,\ldots,n\}, \qquad  \|Ux_i-Ux_j\|^2_{L_2^{n}}- \e^2  \leq \|y_i-y_j\|_{\ell_2^d}^2  \leq \|Ux_i-Ux_j\|^2_{L_2^{n}}+\e^2.
\end{equation}
Since $\|Ua-Ub\|_{L_2^{n}} = \|a-b\|_{L_2^{n}}$ for every $a,b\in L_2^n$, the conclusion follows by the elementary inequality $|\alpha-\beta| \leq \sqrt{|\alpha^2-\beta^2|}$ which holds for every positive numbers $\alpha,\beta\in(0,\infty)$.
\end{proof}

\begin{remark} \label{rem:random}
Fix $p\in[1,\infty)$. The isometric embedding theorem of Ball \cite{Bal90} asserts that any $n$-point subset of $\ell_p$ admits an isometric embedding into $\ell_p^N$ where $N=\binom{n}{2}+1$. Suppose, more generally, that $n,N\in\N$ are such that $N$ is polynomial in $n$. Considerations in the spirit of the proof of Corollary \ref{cor:l2} (e.g. relying on \cite{SZ90}) then show that if $x_1,\ldots,x_n$ are independent uniformly random points in ${\bf B}_{L_p^N}$, then the random set $\{x_1,\ldots,x_n\}$ is $O(\log n)^{1/p}$-incompressible. In other words, incompressibility is a \emph{generic} property of random $n$-point subsets of ${\bf B}_{L_p^N}$. On the other hand, a typical $n$-point subset of ${\bf B}_{L_p^N}$ is known to be approximately a simplex due to work of Arias-de-Reyna, Ball and Villa \cite{AdRBV98} and so, in particular, \mbox{it can be bi-Lipschitzly embedded in $O(\log n)$ dimensions.}
\end{remark}

\subsection{Factorization and proof of Theorem \ref{thm:main}} \label{sec:fact}
Observe that Proposition \ref{prop:bounded} is rather non-canonical as the conclusion depends on the pairwise distances between the points $\{x_i\}_{i=1}^n$ in $L_p(\mu)$ whereas the bound on the dimension depends on $L=\max_i \|x_i\|_{L_\infty(\mu)}$. In order to deduce Theorem \ref{thm:main} from this (a priori weaker) statement we shall leverage the fact that Proposition \ref{prop:bounded} holds for \emph{any} probability measure $\mu$ by optimizing this parameter $L$ over all lattice-isomorphic images of $\{x_i\}_{i=1}^n$. The optimal such \emph{change of measure} which allows us to replace $L$ by $\|\max_i |x_i|\|_{L_p(\mu)}$ is a special case of a classical factorization theorem of Maurey (see \cite{Mau74} or \cite[Theorem~5]{JS01} for the general statement), whose short proof we include for completeness.

\begin{proposition} \label{thm:maurey}
Fix $n\in\N$, $p\in(0,\infty)$ and a probability space $(\Omega,\mu)$. For every points $x_1,\ldots,x_n\in L_p(\mu)$, there exists a nonnegative density function $f:\Omega\to\R_+$ supported on the support of $\max_i|x_i|$ such that if $\nu$ is the probability measure on $\Omega$ given by $\tfrac{\diff\nu}{\diff\mu}=f$, then
\begin{equation} \label{eq:fact}
\max_{i\in\{1,\ldots,n\}}\big\|x_i f^{-1/p}\big\|_{L_\infty(\nu)} \leq \big\| \max_{i\in\{1,\ldots,n\}} |x_i|\big\|_{L_p(\mu)}.
\end{equation}
\end{proposition}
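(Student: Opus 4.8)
The plan is to exhibit the density $f$ explicitly rather than to invoke any duality or abstract factorization machinery. Write $M\eqdef \max_{i\in\{1,\ldots,n\}}|x_i|$ for the pointwise maximum; since each $x_i\in L_p(\mu)$ and $M^p\leq \sum_{i=1}^n|x_i|^p$, we have $M\in L_p(\mu)$ and in particular $\|M\|_{L_p(\mu)}<\infty$. If $M=0$ $\mu$-almost everywhere, then every $x_i$ vanishes a.e.\ and both sides of \eqref{eq:fact} are zero, so I may assume $\|M\|_{L_p(\mu)}>0$.

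The natural guess, which is forced by the homogeneity of \eqref{eq:fact} under scaling each $x_i$, is to let the density be proportional to $M^p$, namely
\begin{equation}
f \eqdef \frac{M^p}{\|M\|_{L_p(\mu)}^p}.
\end{equation}
This $f$ is nonnegative, integrates to $1$ against $\mu$, and is supported exactly on $\{M>0\}=\mathrm{supp}(M)$; hence the measure $\nu$ defined by $\tfrac{\diff\nu}{\diff\mu}=f$ is a probability measure supported on the support of $\max_i|x_i|$, as required.

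It then remains only to verify the inequality \eqref{eq:fact}. Since $\nu$ is carried by the set $S\eqdef\{M>0\}$, the quantity $\big\|x_if^{-1/p}\big\|_{L_\infty(\nu)}$ equals the $\nu$-essential supremum of $|x_i|\,f^{-1/p}$ over $S$, where $f$ is strictly positive so that $f^{-1/p}$ is well defined. On $S$ we have $f^{-1/p}=\|M\|_{L_p(\mu)}/M$, whence the pointwise bound $|x_i|\leq M$ gives
\begin{equation}
|x_i|\,f^{-1/p} = \frac{|x_i|}{M}\,\|M\|_{L_p(\mu)} \leq \|M\|_{L_p(\mu)} \qquad \text{on } S.
\end{equation}
Taking the essential supremum over $S$ and then the maximum over $i\in\{1,\ldots,n\}$ yields \eqref{eq:fact}.

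There is no serious analytic obstacle here: once one commits to the scaling $f\propto M^p$, the estimate is immediate from $|x_i|\leq M$, which is exactly why this choice of change of measure is optimal. The only points requiring care are bookkeeping ones, namely confirming that $M\in L_p(\mu)$ so the normalization is finite and nonzero, disposing of the degenerate case $M=0$, and observing that dividing by $f^{1/p}$ causes no difficulty because $\|\cdot\|_{L_\infty(\nu)}$ only registers values on $\{f>0\}$, where $f$ is strictly positive.
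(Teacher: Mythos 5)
Your proof is correct and is essentially identical to the paper's: the paper defines exactly the same density $f = \max_i|x_i|^p / \int \max_i|x_i|^p \,\mathrm{d}\mu$ and leaves the verification of \eqref{eq:fact} as an elementary check, which is precisely the pointwise bound $|x_i|f^{-1/p} = (|x_i|/M)\|M\|_{L_p(\mu)} \leq \|M\|_{L_p(\mu)}$ that you spell out. Your additional bookkeeping (integrability of $M$, the degenerate case $M=0$, and well-definedness of $f^{-1/p}$ on the support) is a faithful filling-in of the details the paper omits.
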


\begin{proof}
Let $V=\mathrm{supp}(\max_i |x_i|)\subseteq\Omega$ and define the change of measure $f$ as
\begin{equation} \label{eq:cOm}
\forall \ \omega\in \Omega, \qquad f(\omega) \eqdef \frac{\max_{i\in\{1,\ldots,n\}}|x_i(\omega)|^p}{\int_\Omega \max_{i\in\{1,\ldots,n\}}|x_i(\theta)|^p\,\diff \theta}.
\end{equation}
Then, \eqref{eq:fact} is elementary to check.
\end{proof}

We are now ready to complete the proof of Theorem \ref{thm:main}.

\begin{proof} [Proof of Theorem \ref{thm:main}]
Fix a $K$-incompressible family of vectors $x_1,\ldots,x_n\in L_p(\Omega,\mu)$ and let $V=\mathrm{supp}( \max_i |x_i|)\subseteq\Omega$. Denote by $f:\Omega\to\R_+$ the change of density from Proposition \ref{thm:maurey}. If $\tfrac{\diff\nu}{\diff\mu}=f$, then the linear operator $T:L_p(V,\mu)\to L_p(\Omega,\nu)$ given by $Tg = f^{-1/p}g$ is (trivially) a linear isometry. Therefore, Proposition \ref{prop:bounded} and \eqref{eq:fact} show that there exists $d\in\N$ with $d\leq\tfrac{32e^2(2K)^{2p}\log n}{\e^2}$ and points $y_1,\ldots,y_n\in\ell_p^d$ such that the condition
\begin{equation}
\|x_i-x_j\|^p_{L_p(\mu)}- \e = \|Tx_i-Tx_j\|^p_{L_p(\nu)}- \e \leq \|y_i-y_j\|_{\ell_p^d}^p  \leq \|Tx_i-Tx_j\|^p_{L_p(\nu)} + \e = \|x_i-x_j\|^p_{L_p(\mu)}+\e,
\end{equation}
is satisfied for every $i,j\in\{1,\ldots,n\}$. This concludes the proof of Theorem \ref{thm:main}.
\end{proof}

\begin{remark}  \label{rem:mapislinear}
A careful inspection of the proof of Theorem \ref{thm:main} reveals that the low-dimensional points $\{y_i\}_{i=1}^n$ can be obtained as images of the given points $\{x_i\}_{i=1}^n$ under a linear transformation. Indeed, starting from a $K$-incompressible family of points $\{x_i\}_{i=1}^n$ in $L_p(\Omega,\mu)$, we use Proposition \ref{thm:maurey} to find a change of measure $T:L_p(V,\mu)\to L_p(\Omega,\nu)$ such that $\{Tx_i\}_{i=1}^n$ satisfy the stronger assumption of Proposition \ref{prop:bounded}. Then, for some $d\in\N$ with $d\leq\tfrac{32e^2(2K)^{2p}\log n}{\e^2}$ we find pairwise disjoint measurable subsets $S_1,\ldots,S_d$ of $\Omega$, each with positive measure, \mbox{such that if $S:L_p(\Omega,\nu)\to\ell_p^d$ is the linear map}
\begin{equation} \label{eq:operS}
\forall \ z\in L_p(\Omega,\nu), \qquad Sz \eqdef \frac{1}{d^{1/p}}\Big( \frac{1}{\mu(S_1)} \int_{S_1} z \,\diff\nu, \ldots , \frac{1}{\mu(S_d)} \int_{S_d} z\,\diff\nu\Big) \in\ell_p^d,
\end{equation}
then the points $\{y_i\}_{i=1}^n = \{(S\circ T)x_i\}_{i=1}^n\subseteq \ell_p^d$ satisfy the desired conclusion \eqref{eq:main}. 
\end{remark}

\noindent We conclude this section by observing that the argument leading to Theorem \ref{thm:main} is constructive.

\begin{corollary} \label{cor:alg}
In the setting of Theorem \ref{thm:main}, there exists a greedy algorithm which receives as input the high-dimensional points $\{x_i\}_{i=1}^n$ and produces as output the low-dimensional points $\{y_i\}_{i=1}^n$.
\end{corollary}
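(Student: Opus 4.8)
The plan is to observe that every step in the derivation of Theorem~\ref{thm:main} is already algorithmic, the sole non-constructive ingredient being the appeal to Maurey's empirical method inside Proposition~\ref{prop:bounded}; this is precisely the step that I would replace by its greedy derandomization. Since the input is a finite configuration, I may assume (exactly as in Proposition~\ref{prop:bounded}) that the $x_i$ are simple functions, so that all the objects below are finitely represented.

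First I would dispose of the change of measure. Given $\{x_i\}_{i=1}^n\subseteq L_p(\Omega,\mu)$, the density $f$ of Proposition~\ref{thm:maurey} is furnished by the explicit formula~\eqref{eq:cOm} and is therefore computed directly from the data; the operator $T\colon L_p(V,\mu)\to L_p(\Omega,\nu)$, $Tg=f^{-1/p}g$, is an explicit linear isometry and the functions $\{Tx_i\}_{i=1}^n$ satisfy $\max_i\|Tx_i\|_{L_\infty(\nu)}\leq K$, which is the hypothesis of Proposition~\ref{prop:bounded}. I would then make the reduction to the convex-combination form explicit: compute the common refinement $\{S_1,\ldots,S_m\}$ on which each $Tx_i$ is constant, record the values $a(i,k)=(Tx_i)|_{S_k}$ and the weights $\lambda_k=\nu(S_k)$, and assemble the extreme points $\ms{M}(y(k))\in\ell_\infty^{\binom{n}{2}}$ together with the target $\ms{M}(Tx)=\sum_{k=1}^m\lambda_k\ms{M}(y(k))$.

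The heart of the matter is to produce the indices $k_1,\ldots,k_d$ appearing in~\eqref{eq:use-maurey} deterministically rather than by random sampling. Here I would invoke the greedy algorithm of Ivanov~\cite{Iva21}: the space $\ell_\infty^{\binom{n}{2}}$ is $e$-isomorphic to $\ell_{p_n}^{\binom{n}{2}}$ with $p_n=\log\binom{n}{2}\geq2$, and $\ell_r$ is $2$-uniformly smooth for every $r\geq2$, so the ambient space lies within the scope of~\cite{Iva21}. The scheme selects $k_{s+1}\in\{1,\ldots,m\}$ so as to (approximately) minimize the running error $\big\|\tfrac{1}{s+1}\sum_{t=1}^{s+1}\ms{M}(y(k_t))-\ms{M}(Tx)\big\|_{\ell_\infty^{\binom{n}{2}}}$ over the finitely many candidates, and the analysis of~\cite{Iva21} guarantees that after $d\leq\tfrac{32e^2(2K)^{2p}\log n}{\e^2}$ steps the estimate~\eqref{eq:use-maurey} holds, reproducing the conclusion of the probabilistic argument. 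The algorithm then outputs $y_i=d^{-1/p}(a(i,k_1),\ldots,a(i,k_d))\in\ell_p^d$.

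The main obstacle is purely that of matching hypotheses: one must check that passing to the $2$-uniformly smooth renorming $\ell_{p_n}^{\binom{n}{2}}$ (rather than working directly in $\ell_\infty^{\binom{n}{2}}$) costs only the harmless factor $e$ already absorbed into the type estimate~\eqref{eq:proof-type}, so that the greedy error bound yields the same dimension $d$ as Proposition~\ref{prop:bounded}. Everything else—the explicit density~\eqref{eq:cOm}, the passage to simple functions, and the final rescaling—is a finite, effective computation, which establishes the greedy character of the whole procedure.
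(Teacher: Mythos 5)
Your proposal is correct and follows essentially the same route as the paper: the change of measure from Proposition~\ref{thm:maurey} is explicit via \eqref{eq:cOm}, and the only non-constructive step---Maurey's sampling lemma applied to the cone $\ms{C}_p\subseteq\ell_\infty^{\binom{n}{2}}$---is derandomized by invoking Ivanov's greedy algorithm in the $2$-uniformly smooth space $\ell_{\log N}^N$, which is $e$-isomorphic to $\ell_\infty^N$. Your write-up is somewhat more detailed (spelling out the greedy index selection and the constant-matching), but the argument is the same.
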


\begin{proof}
As the density \eqref{eq:cOm} is explicitly defined, the linear operator $T:L_p(V,\mu)\to L_p(\Omega,\nu)$ can also be efficiently constructed. On the other hand, in order to construct the operator $S$ defined by \eqref{eq:operS} one needs to find the corresponding partition $\{S_1,\ldots,S_d\}$ and this was achieved in Proposition \ref{prop:bounded} via an application of Maurey's sampling lemma to the cone $\ms{C}_p \subseteq \ell_\infty^{N}$ where $N=\binom{n}{2}$. As $\ell_\infty^{N}$ is $e$-isomorphic to the 2-uniformly smooth space $\ell_{\log N}^{N}$, Ivanov's result from \cite{Iva21} implies that the construction can be implemented by a greedy algorithm.
\end{proof}


\section{Proof of Theorem \ref{thm:no-Lp}}

In this section we prove Theorem \ref{thm:no-Lp}. The constructed subset of ${\bf B}_{\ell_p}$ which does not embed linearly into $\ell_p^d$ for small $d$ is a slight modification of the one considered in \cite{LMN05}.

\begin{proof} [Proof of Theorem \ref{thm:no-Lp}]
Fix $m\in\N$ and denote by $\{w_i\}_{i=1}^{2^m}$ the rows of the $2^m\times2^m$ Walsh matrix and by $\{e_i\}_{i=1}^{2^m}$ the coordinate basis vectors of $\R^{2^m}$. Consider the $n$-point set
\begin{equation}
\ms{S}_{n,p} = \{0\} \cup \{e_1,\ldots,e_{2^m}\} \cup \big\{ \tfrac{w_1}{2^{m/p}}, \ldots, \tfrac{w_{2^m}}{2^{m/p}}\big\} \subseteq {\bf B}_{\ell_p^{2^m}}
\end{equation}
where $n=2^{m+1}+1$ and suppose that $T:\ell_p^{2^m} \to \ell_p^d$ is a linear operator such that
\begin{equation} \label{eq:assT}
\forall \ x,y\in\ms{S}_{n,p},\qquad \omega(\|x-y\|_{\ell^{2^m}_p}) \leq \|Tx-Ty\|_{\ell_p^d} \leq \Omega(\|x-y\|_{\ell^{2^m}_p}).
\end{equation}
Assume first that $1\leq p<2$. If we write $w_i = \sum_{j=1}^{2^m} w_i(j) e_j$ then by orthogonality of $\{w_i\}_{i=1}^{2^m}$,
\begin{equation} \label{1}
\sum_{i=1}^{2^m} \|Tw_i\|_{\ell_2^d}^2 = \sum_{i=1}^{2^m} \Big\| \sum_{j=1}^{2^m} w_i(j) Te_j\Big\|_{\ell_2^d}^2 = \sum_{j,k=1}^{2^m} \langle w_j, w_k\rangle \langle Te_j, Te_k\rangle = 2^{m} \sum_{j=1}^{2^m} \|Te_j\|_{\ell_2^d}^2.
\end{equation}
By assumption \eqref{eq:assT} on $T$, we have
\begin{equation} \label{2}
\forall \ j\in\{1,\ldots,2^m\}, \qquad \|Te_j\|_{\ell_2^d}^2 \leq \|Te_j\|_{\ell_p^d}^2 \leq \Omega(1)^2
\end{equation}
and
\begin{equation} \label{3}
\forall \ j\in\{1,\ldots,2^m\}, \qquad \|Tw_j\|_{\ell_2^d}^2 \geq 2^{\frac{2m}{p}} d^{-\frac{2-p}{p}}\big\| T\big(\tfrac{w_j}{2^{m/p}} \big)\big\|_{\ell_p^d}^2 \geq 2^{\frac{2m}{p}} d^{-\frac{2-p}{p}} \omega(1)^2.
\end{equation}
Combining \eqref{1}, \eqref{2} and \eqref{3} we deduce that
\begin{equation}
2^{m(1+\frac{2}{p})} d^{-\frac{2-p}{p}} \omega(1)^2 \leq 4^m\Omega(1)^2,
\end{equation}
which is equivalent to $d\geq \left(\tfrac{\omega(1)}{\Omega(1)}\right)^\frac{2p}{2-p} 2^m = \left(\tfrac{\omega(1)}{\Omega(1)}\right)^\frac{2p}{|p-2|} \cdot\tfrac{n-1}{2}$. The case $p>2$ is treated similarly.
\end{proof}


\bibliographystyle{alpha}
\bibliography{quasi-dimension}

\end{document}